\numberwithin{equation}{section}
\theoremstyle{definition}
\newtheorem{thm}{Theorem}[section]
\newtheorem{prop}[thm]{Proposition}
\newtheorem{lem}[thm]{Lemma}
\newtheorem{dfn}[thm]{Definition}
\newtheorem{rem}[thm]{Remark}
\begin{document}

\title{\huge Jones Wenzl projectors in tensor products of a Verma module and irreducible modules}
\author{Ryoga Matsumoto}
\date{}
\maketitle

\begin{abstract}
We construct special idempotents in $\mathrm{End}_{U_q(\mathfrak{sl}_2)}(M(\mu)\otimes V_1^{\otimes n})$ like the Jones Wenzl projector where $M(\mu)$ is Verma module whose highest weight is $\mu$ and $V_1$ is $2$-dimensional irreducible module.
\end{abstract}

\tableofcontents

\section{Introduction}
Temperley and Lieb constructed Temperley Lieb algebra to solve the problem of statistical physics in \cites{TemperleyLieb}. Jones introduced a knot invariant called Jones polynomial using Hecke algebra in \cites{Jones}. A special idempotent in Temperley Lieb algebra called Jones Wenzl idempotent is introduced by Wenzl \cites{Wenzl}. Using the Jones Wenzl projector, Murakami and Murakami extended the Jones polynomial to knot invariants obtained from $n+1$-dimensional representations over $U_q(\mathfrak{sl}_2)$ in \cites{MurakamiMurakami}. On the other side, Iohara, Lehrer and Zhang determined the structure of $\mathrm{End}_{U_q(\mathfrak{sl}_2)}(M(\mu)\otimes V_1^{\otimes n})$ using B type Temperley Lieb algebra constructed independently by Green, Graham and Lehrer \cites{Green, GrahamLehrer} in \cites{IoharaLehrerZhang} where $M(\mu)$ is Verma module whose highest weight is $\mu$ and $V_1$ is $2$-dimensional irreducible module. However the Jones Wenzl projector like idempotent in $\mathrm{End}_{U_q(\mathfrak{sl}_2)}(M(\mu)\otimes V_1^{\otimes n})$ have not been constructed. In this article, we construct special idempotents in $\mathrm{End}_{U_q(\mathfrak{sl}_2)}(M(\mu)\otimes V_1^{\otimes n})$ like Jones Wenzl projector. More precisely, we obtain the following.
\begin{thm}
Let $cap_i$ and $cup_i$ be intertwining operators over $U_q(\mathfrak{sl}_2)$ defined by Theorem \ref{theorem}. Then there exists an element $P_{\mu,n} \in \mathrm{End}_{U_q(\mathfrak{sl}_2)}(M(\mu)\otimes V_1^{\otimes n})$ as follows.
\begin{align*}
P_{\mu,n}^2&=P_{\mu,n} \\
cap_i P_{\mu,n}&=0 \quad (i=1, \cdots, n-1) \\
P_{\mu,n}cup_i&=0 \quad (i=1, \cdots, n-1)
\end{align*}
\end{thm}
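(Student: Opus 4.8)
The plan is to build $P_{\mu,n}$ by a Wenzl-type recursion in $n$ and to verify the three identities by induction, the only novelty over the classical Temperley--Lieb situation being that the Verma module $M(\mu)$ sits to the left of all the $V_1$-strands. Write $E_i := cup_i\, cap_i$ for the Temperley--Lieb-like endomorphism attached to the $i$-th adjacent pair of $V_1$-factors, and regard $P_{\mu,n-1}$ as acting on the first $n-1$ copies of $V_1$ (that is, as $P_{\mu,n-1}\otimes \mathrm{id}_{V_1}$). Starting from $P_{\mu,0}=\mathrm{id}_{M(\mu)}$, I set
\[
P_{\mu,n}=P_{\mu,n-1}-c_n\,P_{\mu,n-1}\,E_{n-1}\,P_{\mu,n-1}
\]
for a scalar $c_n$ to be determined. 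It is convenient to arrange the construction so that $P_{\mu,n}$ is fixed by the transpose anti-automorphism interchanging $cap_i$ and $cup_i$; since this symmetry is preserved by the recursion, each identity $P_{\mu,n}\,cup_i=0$ is the mirror image of the corresponding $cap_i\,P_{\mu,n}=0$ and need not be proved separately.

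The easy part of the induction handles the indices $i\le n-2$. Using $cap_i\,P_{\mu,n-1}=0$ from the inductive hypothesis together with the locality relations (namely $cap_i$ commutes with $E_j$ when $|i-j|\ge 2$, and $cap_i\,cup_i=\delta$ for the loop value $\delta=[2]$ of a closed $V_1$-strand), one checks directly that $cap_i\,P_{\mu,n}=0$ for all $i\le n-2$. The only identity that constrains $c_n$ is then the top one, $cap_{n-1}\,P_{\mu,n}=0$.

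The crux is the partial-closure (``bubble'') identity
\[
cap_{n-1}\,P_{\mu,n-1}\,cup_{n-1}=\theta_n\,P_{\mu,n-2}
\qquad\text{in}\qquad
\mathrm{End}_{U_q(\mathfrak{sl}_2)}\!\bigl(M(\mu)\otimes V_1^{\otimes(n-2)}\bigr),
\]
equivalently $E_{n-1}P_{\mu,n-1}E_{n-1}=\theta_n E_{n-1}$, which upon substitution into $cap_{n-1}\,P_{\mu,n}$ forces $c_n=\theta_n^{-1}$. I expect computing the scalar $\theta_n$, and showing it is nonzero, to be the main obstacle. Because $M(\mu)\otimes V_1^{\otimes(n-1)}$ is resolved by the generic branching $M(\lambda)\otimes V_1\cong M(\lambda+1)\oplus M(\lambda-1)$, the value of $\theta_n$ is found by restricting to the top Verma summand $M(\mu+n-1)$ and reading off the coefficient with which the round loop couples the inclusion $M(\mu+n)\hookrightarrow M(\mu+n-1)\otimes V_1$ to its projection; I would obtain it either by diagonalizing the quantum Casimir on the two summands or by evaluating $cap_{n-1}$ and $cup_{n-1}$ on explicit highest-weight vectors. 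In the Verma-free case this scalar is the classical $[n]/[n-1]$; here it is the corresponding ratio of quantum integers, possibly shifted by the highest weight, and the point to watch is that for $q$ not a root of unity and $\mu$ outside an explicit discrete set it does not vanish, so the recursion is well defined for every $n$.

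Finally, idempotency $P_{\mu,n}^2=P_{\mu,n}$ follows formally: expanding the square, the cross terms are rewritten using the bubble identity and $P_{\mu,n-1}^2=P_{\mu,n-1}$ from the inductive hypothesis, and they cancel exactly when $c_n=\theta_n^{-1}$. Thus the single choice of $c_n$ that kills $cap_{n-1}$ simultaneously enforces idempotency, and the induction closes. The one genuinely delicate point remains the non-vanishing of $\theta_n$, that is, pinning down the admissible set of $\mu$, which I would track explicitly through the recursion.
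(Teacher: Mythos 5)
Your plan is a genuinely different route from the paper's. The paper does not use a Wenzl-type recursion at all: it defines $P_{\mu,n}$ in closed form as the normalized composite
\[
P_{\mu,n}=\frac{F_{\mu,n-1}F_{\mu+1,n-2}\cdots F_{\mu+n-1}\,E_{\mu+n-1}\cdots E_{\mu+1,n-2}E_{\mu,n-1}}{[\mu+n][\mu+n-1]\cdots[\mu+1]},
\]
that is, as the projection onto the top Verma summand $M(\mu+n)\subseteq M(\mu)\otimes V_1^{\otimes n}$ built from the one-step intertwiners $E_\lambda\colon M(\lambda)\otimes V_1\to M(\lambda+1)$ and $F_\lambda\colon M(\lambda+1)\to M(\lambda)\otimes V_1$ (in the spirit of Rose--Tubbenhauer). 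With this definition, idempotency telescopes from the single identity $E_\mu F_\mu=[\mu+1]\,\mathrm{Id}$ (Lemma \ref{EF=[mu+1]}), and the cap/cup annihilation follows from the zig-zag vanishing $(\mathrm{Id}_\mu\otimes cap)(F_\mu\otimes\mathrm{Id})F_{\mu+1}=0$ and its mirror (Lemma \ref{capP=0}); both lemmas are one-line computations on induced bases, with no induction on $n$, no bubble coefficient, and no absorption property needed. Your recursive approach, if completed, proves more (it yields the Wenzl recursion and the partial-trace formula for $P_{\mu,n}$, neither of which appears in the paper), but at the cost of substantially more bookkeeping.

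Three points in your sketch need repair before it closes. First, with the paper's conventions the loop value is $cup(cap(1))=-q-q^{-1}=-[2]$, not $[2]$; signs propagate into $\theta_n$ and $c_n$. Second, your ``equivalent'' form $E_{n-1}P_{\mu,n-1}E_{n-1}=\theta_n E_{n-1}$ is false as stated: the correct identity is $E_{n-1}(P_{\mu,n-1}\otimes\mathrm{id})E_{n-1}=\theta_n\,(P_{\mu,n-2}\otimes\mathrm{id}^{\otimes 2})E_{n-1}$, so your induction must also carry the absorption identity $P_{\mu,n-1}(P_{\mu,n-2}\otimes\mathrm{id})=P_{\mu,n-1}$ (standard, but it cannot be skipped, since it is exactly what lets the bubble coefficient pass through the sandwiching projectors in both the idempotency and the $cap_{n-1}$ computations). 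Third, the scalar you defer is computable and your structural guess is right: closing the last strand of $F_\mu E_\mu$ gives, on $v_i$, the coefficient $-q^{i+1}[\mu+1-i]-q^{i-\mu-1}[i+1]=-[\mu+2]$, whence $\theta_2=-[\mu+2]/[\mu+1]$, and inductively $\theta_n=-[\mu+n]/[\mu+n-1]$. Your worry about ``an explicit discrete set of bad $\mu$'' is moot in this setting: the paper works over $\mathbb{C}(q,q^\mu)$ with $q^\mu$ a formal parameter, so $[\mu+k]\neq 0$ for all $k\geq 1$ automatically, which is precisely what makes the paper's denominators, and your $c_n=\theta_n^{-1}$, well defined for every $n$.
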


\subsection*{Acknowledgements}
We would like to thank Yuji Terashima for valuable discussions. This work was supported by JST, the establishment of university fellowships towards the creation of science technology innovation, Grant Number JPMJFS2102.

\section{Representation of $U_q(\mathfrak{sl}_2)$}
In this section, we define the quantum algebra $U_q(\mathfrak{sl}_2)$ and the representations.
\begin{dfn}
The quantum group $U_q(\mathfrak{sl}_2)$ of $\mathfrak{sl}_2 $ is the algebra on $\mathbb{C}(q)$ generated by the elements $K,K^{-1},E,F$ that satisfy the following relations.
\begin{align*}
KK^{-1}=1=K^{-1}K,\quad KE=q^2EK \quad
KF=q^{-2}FK,\quad EF-FE=\frac{K-K^{-1}}{q-q^{-1}}
\end{align*}
The coproduct $\Delta: U_q(\mathfrak{sl}_2)\rightarrow U_q(\mathfrak{sl}_2)\otimes U_q(\mathfrak{sl}_2)$ is defined below on the algebra.
\begin{align*}
\Delta(K^{\pm1}):=K^{\pm1}\otimes K^{\pm1},\quad \Delta(F):=F\otimes 1+K^{-1} \otimes F, \quad \Delta(E):=E\otimes K+1 \otimes E \\
\end{align*}
\end{dfn}

\begin{dfn}
$k+1$-dimensional irreducible representation $V_k$ has a basis $\{v_0, v_1, \cdots , v_k \}$ called an induced basis and satisfies the following relations for the generators of $U_q(\mathfrak{sl}_2)$.
\begin{align*}
K\cdot v_i:= q^{k-2i}v_i \\
E\cdot v_i:= [i]v_{i-1} \\
F\cdot v_i:= [k-i]v_{i+1}
\end{align*}
where $v_{-1}=v_{k+1}=0$. $[k]$ is defined as
\[
[k]:=\frac{q^k-q^{-k}}{q-q^{-1}}
\]
\end{dfn}

\begin{dfn}
Verma module $M(\mu)$ has a basis $\{v_0, v_1, \cdots\}$ called an induced basis and satisfies the following relations for the generators of $U_q(\mathfrak{sl}_2)$.
\begin{align*}
K\cdot v_i:= q^{\mu-2i}v_i \\
E\cdot v_i:= [i]v_{i-1} \\
F\cdot v_i:= [\mu-i]v_{i+1}
\end{align*}
where $v_{-1}=0$.
\end{dfn}

\begin{dfn}
Given a module $M, M'$ on $U_q(\mathfrak{sl}_2)$, by using the action induced from the coproduct, the tensor product representation $M \otimes M' $ is defined from the following relations. For all $m \in M $ and $m' \in M' $, we have
\begin{align*}
K^{\pm1}\cdot(m\otimes m'):=(K^{\pm1}\cdot m)\otimes (K^{\pm1}\cdot m') \\
F\cdot (m\otimes m'):=(F\cdot m)\otimes m'+(K^{-1}\cdot m)\otimes (F\cdot m') \\
E\cdot (m\otimes m'):=(E\cdot m)\otimes (K\cdot m')+m \otimes(E\cdot m')
\end{align*}
\end{dfn}

\section{Endomorphism algebras of $V_1^{\otimes n}$ and $M(\mu)\otimes V_1^{\otimes n}$}
In this section, we introduce Temperley-Lieb algebra and Temperley-Lieb algebra of type B. Hereinafter, we simply denote $v_i \otimes v_j \in M \otimes N$ by $v_{i,j}$ where $M$ and $N$ are $M(\mu)$ or $V_k$ respectively.

\begin{dfn}
The intertwining operators $cap: \mathbb{C}(q) \rightarrow V_1\otimes V_1$ and $cup: V_1\otimes V_1 \rightarrow \mathbb{C}(q)$ over $U_q\mathfrak{sl}_2$ are defined as follows.
\begin{align*}
&cap(1)=v_{0,1}-q^{-1}v_{1,0} \\
&cup(v_{0,0})=cup(v_{1,1})=0, \ cup(v_{0,1})=-q, \ cup(v_{1,0})=1
\end{align*}
\end{dfn}
\begin{dfn}
Temperley Lieb algebra $TL_n$ is a $\mathbb{C}(q)$-algebra $\mathrm{End}_{U_q(\mathfrak{sl}_2)}(V_1^{\otimes n})$. The generators of $TL_n$ is $\{e_i\}$ ($i=1, \cdots, n-1$) where $e_i$ is defined as follows.
\[
e_i= \mathrm{Id}^{\otimes (i-1)}\otimes (cup \circ cap)\otimes \mathrm{Id}^{\otimes (n-i-1)}
\]
\end{dfn}

\begin{dfn}
Temperley Lieb algebra of type B $TLB_{\mu,n}$ is a $\mathbb{C}(q,q^\mu)$ algebra $\mathrm{End}_{U_q(\mathfrak{sl}_2)}(M(\mu) \otimes V_1^{\otimes n})$.
\end{dfn}
\begin{rem}
In \cite{IoharaLehrerZhang}, it is proved that the endomorphism algebra $\mathrm{End}_{U_q(\mathfrak{sl}_2)}(M(\mu) \otimes V_1^{\otimes n})$ is isomorphic to the Temperley Lieb algebra of type B defined in \cite{GrahamLehrer}, so we denote $\mathrm{End}_{U_q(\mathfrak{sl}_2)}(M(\mu) \otimes V_1^{\otimes n})$ by $TLB_{\mu,n}$.
\end{rem}
\begin{prop}
Set $\mathbb{C}(q,q^\mu)$ linear maps $E_{\mu}: M(\mu) \otimes V_1 \rightarrow M(\mu+1)$ and $F_{\mu}: M(\mu+1) \rightarrow M(\mu) \otimes V_1$ as follows.
\begin{align*}
E_{\mu}(v_{i,0})&:= q^i v_i, \quad E_{\mu}(v_{i,1}):= v_{i+1} \\
F_{\mu}(v_i)&:= [\mu+1-i]v_{i,0}+q^{i-\mu-1}[i]v_{i-1,1}
\end{align*}
Then we have $E_{\mu} \in \mathrm{Hom}_{U_q\mathfrak{sl}_2}(M(\mu) \otimes V_1, M(\mu+1))$ and $F_{\mu} \in \mathrm{Hom}_{U_q\mathfrak{sl}_2}(M(\mu+1), M(\mu) \otimes V_1)$.
\end{prop}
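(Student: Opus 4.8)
The plan is to verify directly that each of $E_\mu$ and $F_\mu$ intertwines the $U_q(\mathfrak{sl}_2)$-action. Since $U_q(\mathfrak{sl}_2)$ is generated by $K^{\pm1}, E, F$, it suffices to check on the induced bases that $E_\mu$ and $F_\mu$ commute with the operators $K$, $E$ and $F$; commutation with $K^{-1}$ then follows automatically from commutation with $K$.

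First I would record the explicit action of the generators on the tensor product $M(\mu)\otimes V_1$. Feeding the coproduct formulas into the basis vectors $v_{i,0}=v_i\otimes v_0$ and $v_{i,1}=v_i\otimes v_1$, and using $[0]=0$, $[1]=1$, one obtains
\begin{align*}
K\cdot v_{i,0} &= q^{\mu-2i+1}\, v_{i,0}, \qquad K\cdot v_{i,1} = q^{\mu-2i-1}\, v_{i,1}, \\
E\cdot v_{i,0} &= q[i]\, v_{i-1,0}, \qquad E\cdot v_{i,1} = v_{i,0} + q^{-1}[i]\, v_{i-1,1}, \\
F\cdot v_{i,0} &= [\mu-i]\, v_{i+1,0} + q^{2i-\mu}\, v_{i,1}, \qquad F\cdot v_{i,1} = [\mu-i]\, v_{i+1,1}.
\end{align*}
These formulas isolate exactly where the $K$- and $K^{-1}$-twists coming from the coproduct enter, which is the main source of bookkeeping in the whole argument.

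With these in hand, checking $E_\mu$ reduces to comparing $E_\mu(X\cdot v_{i,\epsilon})$ with $X\cdot E_\mu(v_{i,\epsilon})$ for $X\in\{K,E,F\}$ and $\epsilon\in\{0,1\}$, against the action on $M(\mu+1)$. The $K$-equivariance is immediate by matching weights. The $E$- and $F$-equivariances each collapse to a single $q$-integer identity; the central one is
\[
[i+1] = q^i + q^{-1}[i],
\]
together with its mirror $[j+1]=q[j]+q^{-j}$ (applied with $j$ equal to $\mu-i$ or $\mu+1-i$). Both identities follow at once from the definition of $[k]$ by clearing the denominator $q-q^{-1}$.

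The verification for $F_\mu$ is the same in spirit but more delicate, since $F_\mu(v_i)$ is a two-term expression and the $F$-action on the tensor product carries an extra $K^{-1}$-twist. I would again compute $F_\mu(X\cdot v_i)$ and $X\cdot F_\mu(v_i)$ for each generator, checking that the $v_{\bullet,0}$- and $v_{\bullet,1}$-components agree separately; the boundary case $i=0$ is handled automatically because the coefficient $q^{i-\mu-1}[i]$ vanishes there. Every resulting identity reduces once more to the two $q$-integer identities above. The only genuine obstacle is the accurate tracking of the $q$-powers produced by the $K^{\pm1}$ factors, so I would organize the computation by splitting off the two basis components at each step to keep the exponents under control.
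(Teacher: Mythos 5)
Your proposal is correct and follows essentially the same route as the paper: both verify equivariance generator-by-generator on the induced bases, using the identical explicit formulas for the $K$, $E$, $F$ action on $M(\mu)\otimes V_1$ (your $K$-formulas are in fact stated more carefully than the paper's, which contains a typo $q^{\mu+1-(i+j)}$ for $q^{\mu+1-2(i+j)}$), and both reduce every check to the two $q$-integer identities $[i+1]=q^i+q^{-1}[i]$ and $[j+1]=q[j]+q^{-j}$. The only difference is that the paper writes out each of the twelve comparisons in full, while you indicate the pattern; filling in those routine computations is exactly what the paper does.
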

\begin{proof}
We must show the commutativity $XE_{\mu}=E_{\mu}X$ and $XF_{\mu}=F_{\mu}X$ where $X=K, E, F$. For $i \geq 0$, the action below is defined as follows.
\[
K, E, F: M(\mu)\otimes V_1 \rightarrow M(\mu)\otimes V_1
\]
\begin{align*}
Kv_{i,j}&= q^{\mu+1-(i+j)}v_{i,j} \\
Ev_{i,0}&= q[i]v_{i-1,0}, \quad Ev_{i,1}= q^{-1}[i]v_{i-1,1}+v_{i,0} \\
Fv_{i,0}&= [\mu-i]v_{i+1,0}+q^{-\mu+2i}v_{i,1}, \quad Fv_{i,1}= [\mu-i]v_{i+1,1}
\end{align*}
First we will show that $F_{\mu}$ is an intertwining operator over $U_q(\mathfrak{sl}_2)$. Now we prove the following diagram is commutative.
\[
\xymatrix{
M(\mu)\otimes V_1 \ar[r]^{E_{\mu}} \ar[d]^{X} & M(\mu+1) \ar[d]^{X} \\
M(\mu)\otimes V_1 \ar[r]^{E_{\mu}} & M(\mu+1)
}
\]

Consider the case $X=K$, we obtain
\begin{align*}
&KE_{\mu,1}v_{i,0}=q^iKv_i=q^{\mu+1-i}v_i \\
&E_{\mu,1}Kv_{i,0}=q^{\mu+1-2i}E_{\mu,1}v_i=q^{\mu+1-i}v_i \\
&KE_{\mu,1}v_{i,1}=Kv_{i+1}=q^{\mu-1-2i}v_{i+1} \\
&E_{\mu,1}Kv_{i,1}=q^{\mu+1-2i-2}E_{\mu,1}v_{i,1}=q^{\mu-1-2i}v_{i+1}
\end{align*}
Then we have $KE_{\mu,1}=E_{\mu,1}K$.
Consider the case $X=E$, we obtain
\begin{align*}
EE_{\mu}v_{i,0}&= q^i Ev_i \\
&= q^i [i]v_{i-1} \\
E_{\mu}Ev_{i,0}&= q[i] E_{\mu,1}v_{i-1,0} \\
&= q^i [i]v_{i-1} \\
EE_{\mu}v_{i,1}&= Ev_{i+1} \\
&= [i+1]v_i \\
E_{\mu}Ev_{i,1}&= E_{\mu,1}(q^{-1}[i]v_{i-1,1}+v_{i,0}) \\
&= q^{-1}[i]v_i+q^i v_i \\
&= [i+1]v_i
\end{align*}
Then we have $EE_{\mu,1}=E_{\mu,1}E$.
Consider the case $X=F$, we obtain
\begin{align*}
FE_{\mu}v_{i,0}&= q^i Fv_i \\
&= q^i [\mu+1-i]v_{i+1} \\
E_{\mu}Fv_{i,0}&= E_{\mu,1}([\mu-i]v_{i+1,0}+q^{-\mu+2i}v_{i,1}) \\
&= q^{i+1}[\mu-i]v_{i+1}+q^{-\mu+2i}v_{i+1} \\
&= q^i [\mu+1-i]v_{i+1} \\
FE_{\mu}v_{i,1}&= Fv_{i+1} \\
&= [\mu-i]v_{i+2} \\
E_{\mu}Fv_{i,1}&= [\mu-i]E_{\mu}v_{i+1,1} \\
&= [\mu-i]v_{i+2}
\end{align*}
Then we have $FE_{\mu,1}=E_{\mu,1}F$. From the computations above, $E_{\mu}$ is an intertwining operator over $U_q(\mathfrak{sl}_2)$. Next we will show that $F_{\mu}$ is an intertwining operator over $U_q(\mathfrak{sl}_2)$. Now we prove the following diagram is commutative.
\[
\xymatrix{
M(\mu+1) \ar[r]^{F_{\mu}} \ar[d]^{X} & M(\mu)\otimes V_1 \ar[d]^{X} \\
M(\mu+1) \ar[r]^{F_{\mu}} & M(\mu)\otimes V_1
}
\]
Consider the case $X=K$, we obtain
\begin{align*}
KF_{\mu}v_i&= K([\mu+1-i]v_{i,0}+q^{i-\mu-1}[i]v_{i-1,1}) \\
&= q^{\mu+1-2i}([\mu+1-i]v_{i,0}+q^{i-\mu-1}[i]v_{i-1,1}) \\
F_{\mu}Kv_i&= q^{\mu+1-2i}F_{\mu,1}v_i \\
&= q^{\mu+1-2i}([\mu+1-i]v_{i,0}+q^{i-\mu-1}[i]v_{i-1,1}) \\
\end{align*}
Then we have $KE_{\mu,1}=E_{\mu,1}K$. Consider the case $X=E$, we obtain
\begin{align*}
EF_{\mu}v_i&= E([\mu+1-i]v_{i,0}+q^{i-\mu-1}[i]v_{i-1,1}) \\
&= q[i][\mu+1-i]v_{i-1,0}+q^{i-\mu-1}[i](q^{-1}[i-1]v_{i-2,1}+v_{i-1,0}) \\
&= [i](q[\mu+1-i]+q^{i-\mu-1})v_{i-1,0}+q^{i-\mu-2}[i][i-1]v_{i-2,1} \\
&= [i][\mu+2-i]v_{i-1,0}+q^{i-\mu-2}[i][i-1]v_{i-2,1} \\
F_{\mu}Ev_i&= [i]F_{\mu}v_{i-1} \\
&= [i]([\mu+2-i]v_{i-1,0}+q^{i-\mu-2}[i-1]v_{i-2,1})
\end{align*}
Then we have $EE_{\mu,1}=E_{\mu,1}E$. Consider the case $X=F$, we obtain
\begin{align*}
FF_{\mu}v_i&= F([\mu+1-i]v_{i,0}+q^{i-\mu-1}[i]v_{i-1,1}) \\
&= [\mu+1-i][\mu-i]v_{i+1,0}+q^{-\mu+2i}[\mu+1-i]v_{i,1}+q^{i-\mu-1}[i][\mu+1-i]v_{i,1} \\
&= [\mu+1-i][\mu-i]v_{i+1,0}+q^{i-\mu}[\mu+1-i](q^i+q^{-1}[i])v_{i,1} \\
&= [\mu+1-i]([\mu-i]v_{i+1,0}+q^{i-\mu}[i+1]v_{i,1}) \\
F_{\mu}Fv_i&= [\mu+1-i]F_{\mu,1}v_{i+1} \\
&= [\mu+1-i]([\mu-i]v_{i+1,0}+q^{i-\mu}[i+1]v_{i,1})
\end{align*}
Then we have $FE_{\mu,1}=E_{\mu,1}F$. From the results above, $F_{\mu}$ is an intertwining operator.
\end{proof}

\section{Main Theorem}
In this section, we define special idempotents in $TLB_{\mu,n}$ like the Jones Wenzl projector in $TL_n$.
\begin{dfn}
Jones Wenzl projector $P'_n \in TL_n$ is defined as follows.
\begin{align*}
P'_1:=\mathrm{Id}, \quad P'_n:=P'_{n-1}+\frac{[n-1]}{[n]}P'_{n-1}e_{n-1}P'_{n-1}
\end{align*}
\end{dfn}

\begin{prop}
We have
\begin{align*}
(P'_n)^2&=P'_n \\
e_i P'_n&=P'_n e_i=0 \quad (i=1,\ldots,n-1)
\end{align*}
\end{prop}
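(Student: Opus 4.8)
The plan is to argue by induction on $n$, proving the idempotency and the two-sided annihilation property simultaneously, and to rely throughout on the standard Temperley--Lieb relations $e_i^2 = -[2]\,e_i$, $e_i e_{i\pm1} e_i = e_i$, and $e_i e_j = e_j e_i$ for $|i-j|\ge 2$, all of which follow directly from the definitions of $cap$ and $cup$ (one computes $cup\circ cap = -[2]$ and checks that the two zig-zag composites of $cap$ and $cup$ equal $\mathrm{Id}_{V_1}$). The base case $n=1$ is immediate, since $P'_1 = \mathrm{Id}$ is idempotent and there are no generators to annihilate. For the inductive step I write $\lambda_n = \frac{[n-1]}{[n]}$ and $P'_n = P'_{n-1} + \lambda_n P'_{n-1} e_{n-1} P'_{n-1}$, and first record the auxiliary identities $P'_{n-1}P'_{n-2} = P'_{n-2}P'_{n-1} = P'_{n-1}$, which follow by expanding one factor through the recursion and using $(P'_{n-2})^2 = P'_{n-2}$. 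The annihilation $e_j P'_n = P'_n e_j = 0$ for $1 \le j \le n-2$ is then immediate, because $e_j P'_{n-1} = P'_{n-1} e_j = 0$ by the induction hypothesis makes both summands of $P'_n$ vanish against $e_j$; so the whole problem reduces to the single generator $e_{n-1}$.

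The technical heart of the argument, and the step I expect to be the main obstacle, is the ``sandwich'' identity
\[
e_{n-1} P'_{n-1} e_{n-1} = -\frac{[n]}{[n-1]}\, P'_{n-2}\, e_{n-1}.
\]
To prove it I expand $P'_{n-1} = P'_{n-2} + \lambda_{n-1} P'_{n-2} e_{n-2} P'_{n-2}$, move the $e_{n-1}$'s past the $P'_{n-2}$'s (which commute with $e_{n-1}$), and collapse the middle using $e_{n-1} e_{n-2} e_{n-1} = e_{n-1}$ together with $(P'_{n-2})^2 = P'_{n-2}$; this yields $(-[2] + \lambda_{n-1})\,P'_{n-2} e_{n-1}$. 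The scalar is then simplified by the quantum-integer recursion $[2][n-1] = [n] + [n-2]$, which gives exactly $-[2] + \frac{[n-2]}{[n-1]} = -\frac{[n]}{[n-1]}$. Getting the bookkeeping of these scalar coefficients and the interplay between the TL relations and the idempotency of $P'_{n-2}$ correct is the delicate part.

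With the sandwich identity in hand the remaining computations are mechanical. For the annihilation I substitute it into $e_{n-1} P'_n = e_{n-1} P'_{n-1} + \lambda_n (e_{n-1} P'_{n-1} e_{n-1}) P'_{n-1}$ and use $P'_{n-2} P'_{n-1} = P'_{n-1}$ to obtain $\big(1 + \lambda_n(-\tfrac{[n]}{[n-1]})\big) e_{n-1} P'_{n-1} = 0$, since $\lambda_n = \frac{[n-1]}{[n]}$; the computation of $P'_n e_{n-1}$ is the mirror image. For idempotency I expand $(P'_n)^2$, replace $(P'_{n-1})^2$ by $P'_{n-1}$, and rewrite the one term quadratic in $e_{n-1}$ using the sandwich identity and $P'_{n-1}P'_{n-2} = P'_{n-1}$; collecting the coefficients gives $P'_{n-1} + (2\lambda_n - \lambda_n)\,P'_{n-1} e_{n-1} P'_{n-1} = P'_n$, where the cancellation $\lambda_n^2 \cdot \frac{[n]}{[n-1]} = \lambda_n$ is again exactly what the definition of $\lambda_n$ provides. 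This completes the induction.
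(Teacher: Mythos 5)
Your proof is correct, but it is worth noting that the paper does not actually prove this proposition at all: its entire ``proof'' is a citation to Proposition~2 of Kauffman--Lins. What you have written out is essentially the standard Wenzl induction that the paper outsources to that reference, so your proposal supplies a self-contained argument where the paper has none. All the key steps check out under the paper's specific sign conventions, which is genuinely valuable here, since the paper's $cap$ and $cup$ give loop value $cup\circ cap=-q-q^{-1}=-[2]$ and the recursion carries a $+\frac{[n-1]}{[n]}$ coefficient (rather than the more common $e_i^2=[2]e_i$ with a minus sign in the recursion); your sandwich identity $e_{n-1}P'_{n-1}e_{n-1}=-\frac{[n]}{[n-1]}P'_{n-2}e_{n-1}$ and the scalar bookkeeping via $[2][n-1]=[n]+[n-2]$ are exactly right for this convention, and the cancellations $\lambda_n\cdot\frac{[n]}{[n-1]}=1$ and $\lambda_n^2\cdot\frac{[n]}{[n-1]}=\lambda_n$ go through as you state. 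Two small points deserve attention: in the annihilation step you need $P'_{n-2}$ to commute with $e_{n-1}$ before absorbing it into $P'_{n-1}$ (you use this implicitly and it holds, since $P'_{n-2}$ is a word in $e_1,\dots,e_{n-3}$); and the case $n=2$ of the sandwich identity cannot be obtained by expanding $P'_1$ through the recursion, so it needs the direct check $e_1P'_1e_1=e_1^2=-[2]e_1$ with the convention $P'_0=\mathrm{Id}$, which your base case does not quite spell out but which is immediate. Incidentally, the paper's definition of $e_i$ literally reads $cup\circ cap$ (a scalar) where $cap\circ cup\colon V_1\otimes V_1\to V_1\otimes V_1$ is intended; your proof uses the correct interpretation.
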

\begin{proof}
See Proposition 2 in \cites{KauffmanLins}.
\end{proof}

\begin{dfn}
Put $E_{\mu,i}:= E_\mu \otimes \mathrm{Id}^{\otimes i}$ and $F_{\mu,i}:= F_\mu \otimes \mathrm{Id}^{\otimes i}$. Extended Jones Wenzl projector $P_{\mu,n} \in TLB_{\mu,n}$ is defined as follows.
\begin{align*}
P_{\mu,1}:= \frac{F_{\mu}E_{\mu}}{[\mu+1]}, \quad P_{\mu,n}:= \frac{F_{\mu,n-1}F_{\mu+1,n-2}\cdots F_{\mu+n-2,1}F_{\mu+n-1} E_{\mu+n-1} E_{\mu+n-2,1}\cdots E_{\mu+1,n-2}E_{\mu,n-1}}{[\mu+n][\mu+n-1]\cdots[\mu+1]}
\end{align*}
\end{dfn}
\begin{rem}
The definition of Extended Jones Wenzl projectors is inspired by Definition 2.11 in \cites{RoseTubbenhauer}. By Corollary 2.13 in \cites{RoseTubbenhauer}, the Jones Wenzl projectors in \cites{RoseTubbenhauer} coincide with $P_n'$.
\end{rem}

\begin{lem} \label{EF=[mu+1]}
We have
\[
E_\mu F_\mu= [\mu+1]\mathrm{Id}
\]
\end{lem}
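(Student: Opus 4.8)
The plan is to verify the identity on the induced basis $\{v_i\}_{i\geq 0}$ of $M(\mu+1)$, since both sides are $\mathbb{C}(q,q^\mu)$-linear endomorphisms and hence are determined by their action on basis vectors. First I would apply $F_\mu$ to $v_i$, using its definition to get $F_\mu(v_i)=[\mu+1-i]v_{i,0}+q^{i-\mu-1}[i]v_{i-1,1}$, and then feed each summand into $E_\mu$. Since $E_\mu(v_{i,0})=q^i v_i$ and $E_\mu(v_{i-1,1})=v_i$, both terms land back on the same vector $v_i$, so that
\[
E_\mu F_\mu(v_i)=\big(q^i[\mu+1-i]+q^{i-\mu-1}[i]\big)v_i.
\]

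The crux is then to show that the scalar $q^i[\mu+1-i]+q^{i-\mu-1}[i]$ equals $[\mu+1]$ independently of $i$. Expanding each quantum integer via $[m]=(q^m-q^{-m})/(q-q^{-1})$, the first term contributes $(q^{\mu+1}-q^{2i-\mu-1})/(q-q^{-1})$ and the second contributes $(q^{2i-\mu-1}-q^{-\mu-1})/(q-q^{-1})$; the $q^{2i-\mu-1}$ terms cancel, leaving exactly $[\mu+1]$. This is the only nontrivial step, and it amounts to a one-line telescoping of $q$-powers rather than any structural argument.

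I expect no real obstacle here: the result is a direct computation, and the only thing to watch is correctly tracking the exponents of $q$ so that the $i$-dependent contributions cancel. A useful sanity check is that the composite $E_\mu F_\mu$ must be a scalar multiple of the identity on general grounds, since $M(\mu+1)$ is generated by its highest weight vector and $E_\mu F_\mu$ is an intertwiner by the preceding Proposition, so a Schur-type argument already forces proportionality to $\mathrm{Id}$; the explicit computation merely pins down the scalar as $[\mu+1]$.
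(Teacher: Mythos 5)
Your computation is correct and is essentially identical to the paper's own proof: apply $F_\mu$ to $v_i$, push both summands through $E_\mu$ onto $v_i$, and verify $q^i[\mu+1-i]+q^{i-\mu-1}[i]=[\mu+1]$ by the same telescoping of $q$-powers. The closing Schur-type remark is a nice sanity check but plays no role in either argument.
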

\begin{proof}
It is given by directly computing $E_\mu F_\mu$.
\begin{align*}
E_\mu F_\mu v_{i}&=[\mu+1-i]v_{i,0}+q^{i-\mu-1}[i]v_{i-1,1} \\
&= [\mu+1-i]q^i v_i+ q^{i-\mu-1}[i]v_i \\
&=\frac{q^{\mu+1}-q^{-\mu-1+2i}+q^{2i-\mu-1}-q^{-\mu-1}}{q-q^{-1}}v_i \\
&= [\mu+1]v_i
\end{align*}
\end{proof}

\begin{lem} \label{capP=0}
We have
\begin{align*}
(\mathrm{Id}_\mu \otimes cap)(F_\mu \otimes \mathrm{Id})F_{\mu+1}=0 \\
E_{\mu+1}(E_\mu \otimes \mathrm{Id})(\mathrm{Id}_\mu \otimes cup)=0
\end{align*}
\end{lem}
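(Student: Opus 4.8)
The plan is to verify both identities directly on the induced basis, since every map involved is $\mathbb{C}(q,q^\mu)$-linear and the source modules $M(\mu+2)$ and $M(\mu)$ are spanned by their induced bases $\{v_i\}$. No further intertwining check is needed: each factor is already an intertwiner by the preceding Proposition and by the definitions of $cap$ and $cup$, so it only remains to exhibit the numerical cancellation. Throughout I write $v_{a,b,c}:=v_a\otimes v_b\otimes v_c$ for the induced basis of $M(\mu)\otimes V_1\otimes V_1$, and I read the two $V_1$-tensorands as the slots on which $cup$ contracts (resp. $cap$ creates).

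For the first identity I would apply the three maps to $v_i\in M(\mu+2)$ from the right. First $F_{\mu+1}(v_i)=[\mu+2-i]v_{i,0}+q^{i-\mu-2}[i]v_{i-1,1}$ in $M(\mu+1)\otimes V_1$. Applying $F_\mu\otimes\mathrm{Id}$ to each tensorand (using $F_\mu$ once more) expands this into the four basis vectors $v_{i,0,0}$, $v_{i-1,1,0}$, $v_{i-1,0,1}$, $v_{i-2,1,1}$ of $M(\mu)\otimes V_1\otimes V_1$. Finally $\mathrm{Id}_\mu\otimes cup$ kills $v_{i,0,0}$ and $v_{i-2,1,1}$ (since $cup(v_{0,0})=cup(v_{1,1})=0$) and keeps only $v_{i-1,1,0}\mapsto v_{i-1}$ and $v_{i-1,0,1}\mapsto -q\,v_{i-1}$. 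The two surviving coefficients are $[\mu+2-i]\,q^{i-\mu-1}[i]$ and $-q\cdot q^{i-\mu-2}[i]\,[\mu+2-i]$, which are exact negatives; hence the result is $0$. The whole point is this matching of $q$-powers, forced by the two occurrences of $F$.

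For the second identity I would run the dual computation on $v_i\in M(\mu)$. Here $\mathrm{Id}_\mu\otimes cap$ produces $v_{i,0,1}-q^{-1}v_{i,1,0}$; applying $E_\mu\otimes\mathrm{Id}$ (using $E_\mu(v_{i,0})=q^iv_i$ and $E_\mu(v_{i,1})=v_{i+1}$) gives $q^i v_{i,1}-q^{-1}v_{i+1,0}$ in $M(\mu+1)\otimes V_1$; and applying $E_{\mu+1}$ gives $q^i v_{i+1}-q^{-1}q^{i+1}v_{i+1}=0$. Again the cancellation is exactly the two terms created by $cap$ meeting the same power of $q$ after the two applications of $E$.

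I expect no real obstacle beyond bookkeeping: the only thing to watch is which tensor slot each operator touches, namely that the leftmost $M(\mu)$-factor is a spectator under $cup$ and $cap$, while $E_\mu\otimes\mathrm{Id}$ and $F_\mu\otimes\mathrm{Id}$ act on the appropriate adjacent pair. Conceptually these identities say that a $cup$/$cap$ turnback composed with the adjacent $E$--$F$ ladder annihilates, which is precisely what will later allow $cap_i$ and $cup_i$ to kill $P_{\mu,n}$ in the main theorem.
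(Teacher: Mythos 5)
Your proof is correct and is essentially identical to the paper's own argument: both verify the two identities by direct computation on the induced basis vectors $v_i$, producing the same intermediate expansions (the four vectors $v_{i,0,0}$, $v_{i-1,1,0}$, $v_{i-1,0,1}$, $v_{i-2,1,1}$ in the first case, and $q^i v_{i,1}-q^{-1}v_{i+1,0}$ in the second) and the same cancelling coefficients. You even handle, correctly, the paper's swapped $cap$/$cup$ labels by reading each map according to its type, which matches what the paper's proof actually computes.
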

\begin{proof}
It is directly obtained using the definition of $cap$, $cup$, $E_\mu$ and $F_\mu$.
\begin{align*}
&(\mathrm{Id}_\mu \otimes cap)(F_\mu \otimes \mathrm{Id})F_{\mu+1}v_i \\
&= (\mathrm{Id}_\mu \otimes cap)(F_\mu \otimes \mathrm{Id})([\mu+2-i]v_{i,0}+q^{i-\mu-2}[i]v_{i-1,1}) \\
&= (\mathrm{Id}_\mu \otimes cap)([\mu+2-i]([\mu+1-i]v_{i,0,0}+q^{i-\mu-1}[i]v_{i-1,1,0}) \\
&\qquad +q^{i-\mu-2}[i]([\mu+2-i]v_{i-1,0,1}+q^{i-\mu-2}[i-1]v_{i-2,1,1})) \\
&= q^{i-\mu-1}[i][\mu+2-i]v_{i-1}-q^{i-\mu-1}[i][\mu+2-i]v_{i-1} \\
&= 0
\end{align*}

\begin{align*}
&E_{\mu+1}(E_\mu \otimes \mathrm{Id})(\mathrm{Id}_\mu \otimes cup)v_i \\
&= E_{\mu+1}(E_\mu \otimes \mathrm{Id})(v_{i,0,1}-q^{-1}v_{i,1,0}) \\
&= E_{\mu+1}(q^i v_{i,1}-q^{-1}v_{i+1,0}) \\
&= q^i v_{i+1}-q^i v_{i+1} \\
&= 0
\end{align*}
\end{proof}

\begin{thm} \label{theorem}
Set $cap_i:= \mathrm{Id}_\mu \otimes \mathrm{Id}^{\otimes i} \otimes cap \otimes \mathrm{Id}^{\otimes (n-i-1)}$ and $cup_i:= \mathrm{Id}_\mu \otimes \mathrm{Id}^{\otimes i} \otimes cup \otimes \mathrm{Id}^{\otimes (n-i-1)}$. Extended Jones Wenzl projectors $P_{\mu,n} \in TLB_{\mu,n}$ satisfy the following conditions.
\begin{align*}
P_{\mu,n}^2&=P_{\mu,n} \\
cap_i P_{\mu,n}&=0 \quad (i=1, \cdots, n-1) \\
P_{\mu,n} cup_i&=0 \quad (i=1, \cdots, n-1)
\end{align*}
\end{thm}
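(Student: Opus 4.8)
The plan is to reduce the entire statement to the two building blocks already proved, Lemma \ref{EF=[mu+1]} and Lemma \ref{capP=0}, by exploiting a recursive description of the numerator of $P_{\mu,n}$. Write $\mathcal{E}_{\mu,n} := E_{\mu+n-1}E_{\mu+n-2,1}\cdots E_{\mu,n-1}$ and $\mathcal{F}_{\mu,n} := F_{\mu,n-1}\cdots F_{\mu+n-2,1}F_{\mu+n-1}$, so that $P_{\mu,n} = \mathcal{F}_{\mu,n}\mathcal{E}_{\mu,n}/c_n$ with $c_n := [\mu+n][\mu+n-1]\cdots[\mu+1]$. The first step is to record the interchange identities $\mathcal{E}_{\mu,n} = E_{\mu+n-1}(\mathcal{E}_{\mu,n-1}\otimes\mathrm{Id})$ and $\mathcal{F}_{\mu,n} = (\mathcal{F}_{\mu,n-1}\otimes\mathrm{Id})F_{\mu+n-1}$, which follow from $(AB)\otimes\mathrm{Id} = (A\otimes\mathrm{Id})(B\otimes\mathrm{Id})$ together with $E_{\mu,i}\otimes\mathrm{Id} = E_{\mu,i+1}$ and its analogue for $F$.

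Next I would establish the scalar identity $\mathcal{E}_{\mu,n}\mathcal{F}_{\mu,n} = c_n\,\mathrm{Id}_{M(\mu+n)}$ by induction on $n$. The base case $n=1$ is Lemma \ref{EF=[mu+1]}. For the inductive step, the interchange identities give $\mathcal{E}_{\mu,n}\mathcal{F}_{\mu,n} = E_{\mu+n-1}((\mathcal{E}_{\mu,n-1}\mathcal{F}_{\mu,n-1})\otimes\mathrm{Id})F_{\mu+n-1}$; the induction hypothesis turns the middle factor into the scalar $c_{n-1}$, and Lemma \ref{EF=[mu+1]} with $\mu$ replaced by $\mu+n-1$ collapses $E_{\mu+n-1}F_{\mu+n-1}$ to $[\mu+n]\mathrm{Id}$, yielding $c_{n-1}[\mu+n] = c_n$. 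Idempotence is then immediate, since $P_{\mu,n}^2 = \mathcal{F}_{\mu,n}(\mathcal{E}_{\mu,n}\mathcal{F}_{\mu,n})\mathcal{E}_{\mu,n}/c_n^2 = \mathcal{F}_{\mu,n}\mathcal{E}_{\mu,n}/c_n = P_{\mu,n}$.

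For the turnback relations it suffices to prove $cap_i\,\mathcal{F}_{\mu,n} = 0$ and $\mathcal{E}_{\mu,n}\,cup_i = 0$ for $i=1,\dots,n-1$, because these factor directly through $P_{\mu,n} = \mathcal{F}_{\mu,n}\mathcal{E}_{\mu,n}/c_n$. I would prove $cap_i\,\mathcal{F}_{\mu,n} = 0$ by induction on $n$, treating two cases. When $i\le n-2$ the cap contracts two of the tensor slots produced by $\mathcal{F}_{\mu,n-1}$, so by locality it commutes past the identity on the last strand and $cap_i\,\mathcal{F}_{\mu,n} = ((cap_i\,\mathcal{F}_{\mu,n-1})\otimes\mathrm{Id})F_{\mu+n-1}$, which vanishes by the induction hypothesis. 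When $i=n-1$ I would unfold one further step to write $\mathcal{F}_{\mu,n} = (\mathcal{F}_{\mu,n-2}\otimes\mathrm{Id}^{\otimes 2})(F_{\mu+n-2}\otimes\mathrm{Id})F_{\mu+n-1}$; since $\mathcal{F}_{\mu,n-2}$ does not touch the last two slots, the cap on those slots isolates $(\mathrm{Id}_{\mu+n-2}\otimes cap)(F_{\mu+n-2}\otimes\mathrm{Id})F_{\mu+n-1}$, which is exactly the first identity of Lemma \ref{capP=0} with $\mu$ replaced by $\mu+n-2$, hence zero. The identity $\mathcal{E}_{\mu,n}\,cup_i = 0$ is proved symmetrically, the case $i=n-1$ reducing instead to the second identity of Lemma \ref{capP=0}.

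The main obstacle is bookkeeping rather than anything conceptual: one must match the tensor-slot indices of $cap_i$ and $cup_i$ against the slots successively created by $\mathcal{F}_{\mu,n}$ and consumed by $\mathcal{E}_{\mu,n}$, and verify that contracting or inserting two adjacent slots genuinely commutes with the part of $\mathcal{F}_{\mu,n}$ or $\mathcal{E}_{\mu,n}$ acting on the remaining slots, so that the $i=n-1$ case really reduces to Lemma \ref{capP=0} after the shift $\mu\mapsto\mu+n-2$. Once this locality is set up cleanly through the interchange law, every case collapses to one of the two lemmas and no further representation-theoretic input is required.
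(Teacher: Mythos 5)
Your proposal is correct and takes essentially the same route as the paper: both reduce idempotence to the telescoping identity $E_{\mu+n-1}E_{\mu+n-2,1}\cdots E_{\mu,n-1}F_{\mu,n-1}\cdots F_{\mu+n-2,1}F_{\mu+n-1}=[\mu+n]\cdots[\mu+1]\mathrm{Id}$ via repeated use of Lemma \ref{EF=[mu+1]}, and both establish the turnback relations by sliding $cap_i$ (resp.\ $cup_i$) past the outer $F$- (resp.\ $E$-) factors until the three-term composition of Lemma \ref{capP=0}, with $\mu$ shifted to $\mu+i-1$, is isolated and kills the expression. The only difference is presentational: you formalize the paper's ``$\cdots$'' manipulations as explicit inductions on $n$ built on the interchange identities $E_{\mu,i}\otimes\mathrm{Id}=E_{\mu,i+1}$ and $F_{\mu,i}\otimes\mathrm{Id}=F_{\mu,i+1}$, which is the same argument made slightly more rigorous.
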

\begin{proof}
First we show $P_{\mu,n}^2=P_{\mu,n}$. From Lemma \ref{EF=[mu+1]},
\begin{align*}
&E_{\mu+n-1}E_{\mu+n-2,1}\cdots E_{\mu,n-1}F_{\mu,n-1}\cdots F_{\mu+n-2,1}F_{\mu+n-1} \\
&= [\mu+1]E_{\mu+n-1}E_{\mu+n-2,1}\cdots E_{\mu+1,n-2}F_{\mu+1,n-2}\cdots F_{\mu+n-2,1}F_{\mu+n-1} \\
&= \cdots \\
&= [\mu+n][\mu+n-1]\cdots [\mu+1]\mathrm{Id}_{\mu+n-1}
\end{align*}
Then we obtain
\begin{align*}
P_{\mu,n}^2&= \frac{1}{([\mu+n]\cdots [\mu+1])^2}F_{\mu,n-1}\cdots F_{\mu+n-1} E_{\mu+n-1}\cdots E_{\mu,n-1}F_{\mu,n-1} \\
&\qquad \cdots F_{\mu+n-1} E_{\mu+n-1}\cdots E_{\mu,n-1} \\
&= \frac{F_{\mu,n-1}\cdots F_{\mu+n-1}E_{\mu+n-1}\cdots E_{\mu,n-1}}{[\mu+n]\cdots [\mu+1]} \\
&= P_{\mu,n}
\end{align*}
Next, we prove the second formula. By Lemma \ref{capP=0}, we have
\begin{align*}
&cap_i P_{\mu,n} \\
&= \frac{1}{[\mu+n]\cdots [\mu+1]}F_{\mu,n-1}\cdots (F_{\mu+i-2} \otimes \mathrm{Id}^{\otimes (n-i+1)})(\mathrm{Id}_{\mu+i-1}\otimes cap \otimes \mathrm{Id}^{\otimes (n-i-1)}) \\
&\qquad (F_{\mu+i-1} \otimes \mathrm{Id}^{\otimes (n-i)})(F_{\mu+i} \otimes \mathrm{Id}^{\otimes (n-i-1)})\cdots F_{\mu+n-1}E_{\mu+n-1}\cdots E_{\mu,n-1} \\
&= 0
\end{align*}
Finally, we show the third formula. By Lemma \ref{capP=0}, we get
\begin{align*}
&P_{\mu,n}cup_i \\
&= \frac{1}{[\mu+n]\cdots [\mu+1]}F_{\mu,n-1}\cdots F_{\mu+n-1}E_{\mu+n-1}\cdots (E_{\mu+i}\otimes \mathrm{Id}^{\otimes (n-i-1)})(E_{\mu+i-1}\otimes \mathrm{Id}^{\otimes (n-i)}) \\
&\qquad (\mathrm{Id}_{\mu+i-1}\otimes cup \otimes \mathrm{Id}^{n-i-1})(E_{\mu+i-2}\otimes \mathrm{Id}^{\otimes (n-i+1)})\cdots E_{\mu,n-1} \\
&= 0
\end{align*}
\end{proof}

\begin{bibdiv}
\begin{biblist}
\bib{AndersenLehrerZhang}{article}{
author = {Andersen, Henning}
author = {Lehrer, Gus}
author = {Zhang, Ruibin},
title = {Cellularity of certain quantum endomorphism algebras},
journal = {Pacific Journal of Mathematics},
volume = {279},
pages = {11-35},
year = {2013}
}

\bib{MR3263166}{article}{
AUTHOR = {Cautis, Sabin}
author = {Kamnitzer, Joel}
author = {Morrison, Scott},
TITLE = {Webs and quantum skew {H}owe duality},
JOURNAL = {Math. Ann.},
FJOURNAL = {Mathematische Annalen},
VOLUME = {360},
NUMBER = {1-2},
PAGES = {351--390},
YEAR = {2014},
ISSN = {0025-5831}
}
\bib{GrahamLehrer}{article}{
author = {Graham, J. J.}
author = {Lehrer, G. I.}
title = {Diagram algebras, Hecke algebras and decomposition numbers at roots of unity},
journal = {Annales Scientifiques de l'Ecole Normale Sup\'{e}rieure},
volume = {36},
number = {4},
pages = {479-524},
year = {2003},
issn = {0012-9593}
}

\bib{Green}{article}{
author = {GREEN, R. M.},
title = {GENERALIZED TEMPERLEY-LIEB ALGEBRAS AND DECORATED TANGLES},
journal = {Journal of Knot Theory and Its Ramifications},
volume = {07},
number = {02},
pages = {155-171},
year = {1998}
}

\bib{IoharaLehrerZhang}{article}{
author={Kenji Iohara}
author={G. I. Lehrer}
author={Ruibin Zhang},
title={Equivalence of a tangle category and a category of infinite dimensional $U_q(\mathfrak{sl}_2$)-modules},
journal={Representation Theory of The American Mathematical Society},
volume={25},
pages={265-299},
year={2021}
}


\bib{Jones}{article}{
author = {Vaughan F. R. Jones},
title = {A polynomial invariant for knots via von Neumann algebras},
journal = {Bulletin (New Series) of the American Mathematical Society},
volume = {12},
number = {1},
pages = {103-111},
publisher = {American Mathematical Society},
year = {1985},
}

\bib{KauffmanLins}{book}{
author = {Louis H. Kauffman},
author = {S. Lins},
title = {Temperley-Lieb Recoupling Theory and Invariants of 3-Manifolds (AM-134)},
publisher = {Princeton University Press},
year = {1994}
}

\bib{Lacabanne,TubbenhauerVaz}{article}{
author = {Lacabanne, Abel}
author = {Tubbenhauer, Daniel}
author = {Vaz, Pedro},
title = {Verma Howe duality and LKB representations},
year = {2022},
note = {preprint}
}

\bib{MartinSaleur}{article}{
author = {Martin, Paul}
author = {Saleur, Hubert},
title = {The Blob Algebra and the Periodic Temperley-Lieb Algebra},
journal = {Letters in Mathematical Physics},
volume = {30},
pages = {189-206},
year = {1994}
}

\bib{MurakamiMurakami}{article}{
author = {Hitoshi Murakami}
author = {Jun Murakami},
title = {The colored Jones polynomials and the simplicial volume of a knot},
journal = {Acta Mathematica},
volume = {186},
number = {1},
pages = {85 - 104},
publisher = {Institut Mittag-Leffler},
year = {2001},
}

\bib{RoseTubbenhauer}{article}{
author = {Rose, David E. V.}
author = {Tubbenhauer, Daniel},
title = {Symmetric Webs, Jones-Wenzl Recursions, and q-Howe Duality},
journal = {International Mathematics Research Notices},
volume = {2016},
number = {17},
pages = {5249-5290},
year = {2015},
month = {10},
}

\bib{TemperleyLieb}{article}{
author = {H. N. V. Temperley}
author = {E. H. Lieb},
title = {Relations between the 'Percolation' and 'Colouring' Problem and other Graph-Theoretical Problems Associated with Regular Planar Lattices: Some Exact Results for the 'Percolation' Problem},
journal = {Proceedings of the Royal Society of London. Series A, Mathematical and Physical Sciences},
volume = {322},
number = {1549},
pages = {251--280},
publisher = {The Royal Society},
year = {1971}
}

\bib{Wenzl}{article}{
author = {H. Wenzl},
title = {On a sequence of projections},
journal = {C. R. Math. Rep. Can. J. Math},
number = {9},
pages = {5-9},
year = {1897}
}
\end{biblist}
\end{bibdiv}

\footnotesize{DEPARTMENT OF MATHEMATICS, TOHOKU UNIVERSITY, 6-3, AOBA, ARAMAKI-AZA, AOBA-KU, SENDAI, 980-8578, JAPAN} \par
\footnotesize{\textit{Email Address}}: \texttt{matsumoto.ryoga.t2@dc.tohoku.ac.jp}

\end{document}